\documentclass[12pt]{amsart}
\usepackage{amsmath}
\usepackage{amsfonts, amssymb, euscript, mathrsfs}
\usepackage{graphicx}
\usepackage[usenames, dvipsnames]{color}

\usepackage{tikz}
\usepackage{enumerate}
\usepackage[tmargin=1in,bmargin=1in,lmargin=0.9in,rmargin=0.9in]{geometry}
\usepackage{aliascnt}
\usepackage{hyperref}
\usepackage{verbatim}

\allowdisplaybreaks[4]

\newcommand{\BR}{\mathbb{R}}

\newcommand{\SL}{\sum\limits}

\newcommand{\al}{\alpha}

\newcommand{\CF}{\mathcal F}

\newcommand{\MP}{\mathbf P}

\newcommand{\Oa}{\Omega}

\newcommand{\si}{\sigma}

\renewcommand{\phi}{\varphi}

\renewcommand{\comment}[1]{}

\newcommand{\mP}{\mathbf{p}}

\newcommand{\md}{\mathrm{d}}

\begin{document}


\theoremstyle{plain}
\newtheorem{thm}{Theorem}[section]
\newtheorem*{thmnonumber}{Theorem}
\newtheorem{lemma}[thm]{Lemma}
\newtheorem{prop}[thm]{Proposition}
\newtheorem{cor}[thm]{Corollary}
\newtheorem{open}[thm]{Open Problem}

\theoremstyle{definition}
\newtheorem{defn}{Definition}
\newtheorem{asmp}{Assumption}
\newtheorem{notn}{Notation}
\newtheorem{prb}{Problem}

\theoremstyle{remark}
\newtheorem{rmk}{Remark}
\newtheorem{exm}{Example}
\newtheorem{clm}{Claim}

\title[Absence of collisions for competing Brownian particles]{Yet Another Condition for Absence of Collisions\\ for Competing Brownian Particles}

\author{Tomoyuki Ichiba}

\address{University of California, Santa Barbara, Department of Statistics and Applied Probability}

\email{ichiba@pstat.ucsb.edu}

\author{Andrey Sarantsev}

\address{University of California, Santa Barbara, Department of Statistics and Applied Probability}

\email{sarantsev@pstat.ucsb.edu}

\date{Version 14. January 11, 2017}

\subjclass[2010]{Primary 60K35, secondary 60H10, 60J65, 60J60}

\keywords{competing Brownian particles, total collision, multiple collision, triple collision, simultaneous collision}

\begin{abstract}
Consider a finite system of rank-based competing Brownian particles, where the drift and diffusion of each particle depend only on its current rank relative to other particles. We present a simple sufficient condition for absence of multiple collisions of a given order, continuing the earlier work by Bruggeman and Sarantsev (2015). Unlike in that paper, this new condition works even for infinite systems. 
\end{abstract}

\maketitle

\thispagestyle{empty}

\section{Introduction and the Main Result}

Consider a system of $N$ Brownian particles $X_1(t), \ldots, X_N(t)$, $t\ge 0$ on the real line.  Each particle $X_i(t)$ of name $\,i\,$ evolves as a Brownian motion with rank-dependent drift coefficient $g_k$ and diffusion coefficient $\sigma_k^2$, where $k$ is the current {\it relative rank} of $X_i$ at time $t$. Namely, at each moment we {\it rank} particles from bottom to top:
$$
Y_{1}(t) \, :=\, X_{(1)}(t) \le \ldots \le X_{(N)}(t) \, =:\, Y_{N} (t),
$$
so that the particle occupying the lowest position has rank $1$, the next particle has rank $2$, etc. If two or more particles are {\it tied}, that is, they occupy the same position at the same time, then we {\it resolve ties in lexicographic order}\, :  particles $X_i$ with lower {\it names} 
are assigned lower {\it ranks}.

We can make this description more formal, giving an SDE which governs this system $X(t) = (X_1(t), \ldots, X_N(t))$. 
For any vector $x = (x_1, \ldots, x_N) \in \BR^N$, there exists a unique {\it ranking permutation}: a one-to-one mapping $\mP_x : \{1, \ldots, N\} \to \{1, \ldots, N\}$, with the following properties:

(a) $x_{\mP_x(i)} \le x_{\mP_x(j)}$ for $1 \le i < j \le N$;

(b) if $x_{\mP_x(i)} = x_{\mP_x(j)}$ for $1 \le i < j \le N$, then $\mP_x(i) < \mP_x(j)$. 

\begin{defn} On a filtered probability space $(\Oa, \CF, (\CF_t)_{t \ge 0}, \MP)$, with the filtration satisfying the usual conditions, we consider a continuous, adapted $\BR^N$-valued process $X = (X(t), t \ge 0)$, $X(t) = (X_1(t), \ldots, X_N(t))$, which satisfies the following SDE: for $i = 1, \ldots, N$, 
\begin{equation} 
\md X_i(t) = \sum\limits_{k=1}^N1\left(\mP_t(k) = i\right)\left(g_k\md t + \si_k\md W_i(t)\right) , 
\label{eq:SDE-basic}
\end{equation}
where $W = (W_1, \ldots, W_N)$ is an $N$-dimensional Brownian motion, $\,1(\cdot)\,$ is the indicator function, and $\mP_t := \mP_{X(t)}$. Then the process $X$ is called a {\it system of $N$ competing Brownian particles} with {\it drift coefficients} $g_1, \ldots, g_N \in \mathbb R$, and {\it diffusion coefficients} $\si_1^2, \ldots, \si_N^2 > 0$, and $Y (t) \, :=\, (Y_{1}(t), \ldots , Y_{N}(t)) $ with $Y_{k}(t) := X_{\mP_{t}(k)}(t) $, $k=1, \ldots , N$, $t \ge 0$ is called a system of $N$ {\it ranked particles}. 
\label{defn:CBP}
\end{defn}

For any values $g_1, \ldots, g_N \in \BR$, $\si_1^{2}, \ldots, \si_N^{2} > 0$, there exists a (unique in law) weak solution to this SDE (\ref{eq:SDE-basic}); this result follows from \cite{Bass1987}. 



These systems were introduced in \cite{BFK2005, FernholzBook} and further studied in \cite{5people,  IPS2012, JM2008, PP2008,  MyOwn2, MyOwn10}. In particular, they are used as a model for financial markets with $N$ stocks, where $Y_i(t) = \exp\left(X_i(t)\right)$ is the capitalization of the $i$th stock. One observed real-world market feature is that stocks with smaller capitalizations have larger volatilities; we can model this by taking $\sigma_1 > \sigma_2 > \ldots > \sigma_N$. Another feature is as follows: consider the market weights $\mu_k(t) = Y_k(t)/(Y_1(t) + \ldots + Y_N(t))$ and rank them from top to bottom; then plot these ranked market weights versus their ranks on the log-log plot. The resulting graph will be close to a straight line. This can also be described by the market model of competing Brownian particles, see \cite{CP2010}. Other applications of competing Brownian particles for market models include \cite{JR2013b, MyOwn4}. Let us also mention another application of competing Brownian particles: this is a discrete analogue of the \textsc{McKean-Vlasov} equation, which governs a nonlinear diffusion process, and it can approximate this process, see \cite{4people, JR2013a, S2012}. 

Of particular interest in such system of competing Brownian particles are {\it triple and multiple collisions}. A {\it triple collision} occurs when three particles occupy the same position at the same moment. A {\it simultaneous collision} occurs when two particles collide and at the same moment other two particles collide. A triple collision is a particular case of a simultaneous collision. 



It was shown in \cite{IKS2013} that a strong solution to the SDE governing the system of competing Brownian particles exists up to the first moment of a triple collision. Whether there is a strong solution after this triple collision is still, to our best knowledge, an open question. 

The question whether triple collisions occur in this system of competing Brownian particles 
has attracted considerable attention in recent years. Progressively better results were established in the papers \cite{IK2010, IKS2013} until, in the paper \cite{MyOwn3}, a necessary and sufficient condition for a.s. absence of triple collisions was found: the sequence $(\si_1^2, \ldots, \si^2_N)$ must be {\it concave}, that is, 
$$
\si_k^2 \ge \frac12\left(\si_{k-1}^2 + \si_{k-1}^2\right),\ \ k = 2, \ldots, N-1.
$$
In addition, if there are a.s. no triple collisions, then there are a.s. no simultaneous collisions. 

It is much harder, however, to study {\it multiple collisions}, when four, five or more particles collide. Let us state a formal definition.

\begin{defn} \label{Def1} We say there are {\it no collisions of particles with ranks} $k_-, \ldots, k_+ \in \{1, \ldots, N\}$, if 
$$
\MP\left(\exists\, t > 0:\ Y_{k-}(t) = \ldots = Y_{k+}(t)\right) = 0.
$$
There are {\it no $n$-tuple collisions} if there are no collisions of particles with ranks $k_- <  \ldots < k_+$ for every pair of $k_-, k_+ = 1, \ldots, N$ such that $k_+ - k_- = n - 1$; or, equivalently, if for every subset $I = \{i_1, \ldots, i_n\} \subseteq \{1, \ldots, N\}$ of $n$ elements we have: 
$$
\MP\left(\exists\, t > 0:\ X_{i_1}(t) = \ldots = X_{i_n}(t)\right) = 0.
$$
If there are no $n$-tuple collisions with $n = N$, we say that there are {\it no total collisions}. 
\end{defn}

These topics were studied in \cite{MyOwn5}. It was shown there that these properties are independent of the initial condition, as well as of the drift coefficients $g_1, \ldots, g_N$. However, only sufficient conditions are known for lack of quadruple, quintuple and other collisions. They are rather complicated: for example, lack of quadruple collisions for a system of $N = 5$ particles involves $17$ inequalities on the diffusion coefficients $\si_1^2, \ldots, \si_N^2$. In this short paper, we attempt to provide some easy-to-verify sufficient conditions. One of them is stated in Theorem~\ref{thm:main}, and proved in Section 2. This paper extends from \cite{MyOwn5} and complements its results. In Remark~\ref{rmk:comp} later, we compare our new results with those in  \cite{MyOwn5} for the simplest case: total collisions for $N \ge 4$ competing Brownian particles, and we see that the new results do not follow from the previous ones. The paper \cite{MyOwn5} also contains sufficient conditions for absence of more complicated collisions, such as 
\begin{equation} \label{eq:multi} 
\MP \left (Y_1(t) = Y_2(t) \quad  \text{and}  \quad Y_4(t) = Y_5(t) = Y_6(t) \right) \, =\, 0. 
\end{equation}
We touch this subject in this paper in Section 3. In Section 4, we study multiple collisions for infinite systems of competing Brownian particles. We are able to do this in this paper (as opposed to the previous paper \cite{MyOwn5}), because the coefficient $(n-1)/2$ in  condition~(\eqref{eq:main} defined below) does not depend on the total number $N$ of particles.

The main result of the paper is as follows. 

\begin{thm} \label{thm1.1}For $n \ge 4$, there are no $n$-tuple collisions in Definition \ref{Def1}, if $\,\sigma_{1}^{2}, \ldots , \sigma_{n}^{2}\,$ satisfy 
\begin{equation}
\label{eq:main}
\max\limits_{1 \le k \le N}\si_k^2 < \frac{n-1}2\min\limits_{1 \le k \le N}\si_k^2.
\end{equation}
\label{thm:main}
\end{thm}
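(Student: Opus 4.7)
The plan is to fix indices $k_-<k_+$ in $\{1,\ldots,N\}$ with $k_+-k_-=n-1$ and show that the gap vector
\[
\vec Z(t) \,:=\, \bigl(Z_{k_-}(t),\ldots,Z_{k_+-1}(t)\bigr),\qquad Z_j := Y_{j+1}-Y_j,
\]
almost surely never reaches the origin of $[0,\infty)^{n-1}$. Since $\{\vec Z(t)=0\}$ is exactly the event of an $n$-tuple collision at ranks $k_-,\ldots,k_+$, a finite union over such pairs, together with the strong Markov property at dyadic rationals, then gives the theorem.

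From the standard decomposition $dY_k = g_k\,dt + \sigma_k\,dW_k + \tfrac12(dL_{k-1,k}-dL_{k,k+1})$ of each ranked particle (with convention $L_{0,1}=L_{N,N+1}\equiv 0$), $\vec Z$ is a semimartingale reflected Brownian motion in the orthant,
\[
d\vec Z \,=\, \vec\mu\,dt + A\,d\vec W + R\,d\vec L^{\mathrm{int}} - \tfrac12 e_1\,dL_{k_--1,k_-} - \tfrac12 e_{n-1}\,dL_{k_+,k_++1},
\]
with tridiagonal covariance $\Sigma=AA^{\top}$ (diagonal $\sigma_j^2+\sigma_{j+1}^2$, off-diagonal $-\sigma_{j+1}^2$), symmetric tridiagonal interior reflection matrix $R$ (diagonal $1$, off-diagonal $-\tfrac12$), and two boundary local-time corrections which vanish when $k_-=1$ or $k_+=N$.

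The key step is to take as Lyapunov function the quadratic form $F(\vec Z) := \vec Z^{\top}R^{-1}\vec Z$. The matrix $R$ is positive definite (its eigenvalues are $4\sin^2(k\pi/(2n))$, $k=1,\ldots,n-1$), so $R^{-1}$ exists and moreover has nonnegative entries. The motivation for the choice $Q=R^{-1}$ is the cancellation
\[
2\vec Z^{\top}R^{-1}\cdot R\,d\vec L^{\mathrm{int}} \,=\, 2\sum_{j=k_-}^{k_+-1} Z_j\,dL_{j,j+1} \,=\, 0,
\]
using that each $dL_{j,j+1}$ is supported on $\{Z_j=0\}$; thus all interior local times drop out of $dF$. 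By It\^o's formula and the matrix inequalities $2\sigma_{\min}^2 R \preceq \Sigma \preceq 2\sigma_{\max}^2 R$ (each verified by a Gershgorin check on $\Sigma\mp2\sigma^2_{\min/\max}R$, whose row sums vanish), one obtains the bound $\tr(R^{-1}\Sigma)\ge 2(n-1)\sigma_{\min}^2$ for the drift together with $d\langle M_F\rangle\le 8\sigma_{\max}^2 F\,dt$ for the martingale part. Ignoring the boundary terms, $F$ therefore dominates a squared Bessel process of effective dimension
\[
d^* \,:=\, \frac{4\cdot 2(n-1)\sigma_{\min}^2}{8\sigma_{\max}^2} \,=\, \frac{(n-1)\sigma_{\min}^2}{\sigma_{\max}^2} \,>\, 2,
\]
strict by hypothesis~\eqref{eq:main}, and such processes almost surely never return to the origin, forcing $F>0$ and hence $\vec Z\ne 0$.

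The main technical obstacle is the pair of nonpositive boundary local-time contributions $-(R^{-1}\vec Z)_1\,dL_{k_--1,k_-}$ and $-(R^{-1}\vec Z)_{n-1}\,dL_{k_+,k_++1}$ to $dF$, which arise because $\vec Z$ excludes the outer gaps $Z_{k_--1}$ and $Z_{k_+}$. I expect these can be absorbed by a localization argument: introduce $\tau_M=\inf\{t:L_{k_--1,k_-}(t)+L_{k_+,k_++1}(t)\ge M\}$, run the Bessel comparison on $[0,\tau_M)$ exploiting the strict slack in~\eqref{eq:main} to bound the corrections, and pass to $M\to\infty$ via the strong Markov property. In the boundary-free cases $k_-=1$ or $k_+=N$ (in particular, total collisions) no such issue arises and the argument is immediate.
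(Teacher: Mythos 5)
Your Lyapunov computation is sound in the boundary-free case and essentially reproduces (up to a factor of $2$) the argument behind Lemma~\ref{lemma:total} from Ichiba--Karatzas--Shkolnikov, since $Z^{\top}R^{-1}Z=2\sum_k(Y_k-\bar Y)^2$: the drift is at least $2(n-1)\si_{\min}^2$, the quadratic variation at most $8\si_{\max}^2F$, the interior local times cancel because $Q=R^{-1}$, and the effective dimension $(n-1)\si_{\min}^2/\si_{\max}^2$ exceeds $2$ exactly under~\eqref{eq:main}. (Two small slips: the eigenvalues of $R$ are $2\sin^2(k\pi/(2n))$, not $4\sin^2(k\pi/(2n))$, and the truly boundary-free situation is $k_-=1$ \emph{and} $k_+=N$, i.e.\ only the total collision.) The genuine gap is exactly where you flag it, and the proposed fix does not close it. The terms $-(R^{-1}Z)_1\,\md L_{(k_--1,k_-)}$ and $-(R^{-1}Z)_{n-1}\,\md L_{(k_+,k_++1)}$ are nonpositive, singular in $t$, and --- this is the crux --- they charge precisely the times when a neighbouring particle joins the colliding block, i.e.\ precisely when $F$ is near $0$. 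Stopping at $\tau_M$ controls only the \emph{total} mass of the external local time, not its rate of accumulation on $\{F\approx 0\}$; a squared Bessel process of dimension $>2$ minus an adapted nondecreasing singular process of bounded total mass can perfectly well hit the origin. Concretely, in the standard non-attainability test one shows $F^{p}$ is a local supermartingale for some $p<0$; the boundary terms contribute $-pF^{p-1}(R^{-1}Z)_1\,\md L\ge 0$ to $\md F^{p}$, of order $F^{p-1/2}\,\md L$, which blows up as $F\to0$ and cannot be absorbed by the strictly positive $\md t$-drift surplus coming from the slack in~\eqref{eq:main}. No positive-definite $Q$ with $QR$ diagonal avoids this, since $R^{-1}\ge 0$ entrywise forces $(R^{-1}Z)_1>0$ off the origin.

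The paper closes this gap probabilistically rather than analytically, and you should do the same (Lemma~\ref{lemma:total2multiple}). If ranks $k_-,\dots,k_+$ collide at time $t$, pass to the \emph{maximal} block of ranks $l_-\le k_-<k_+\le l_+$ sharing that position at time $t$; by continuity there is a rational $q<t$ such that on $[q,t]$ this block stays strictly separated from ranks $l_--1$ and $l_++1$. On that interval the block is an autonomous system of $l_+-l_-+1\ge n$ competing Brownian particles with \emph{no} external local times, and it suffers a total collision at time $t$ --- which your boundary-free argument (equivalently Corollary~\ref{cor:total-easy}) rules out, because $\max_k\si_k^2<\frac{n-1}{2}\min_k\si_k^2\le\frac{l_+-l_-}{2}\min_k\si_k^2$. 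A countable union over rational $q$ and the finitely many pairs $(l_-,l_+)$ finishes the proof. This is also why~\eqref{eq:main} takes the max and min over all of $\{1,\dots,N\}$ rather than over the window $\{k_-,\dots,k_+\}$: the enlarged block may involve any of the diffusion coefficients.
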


\begin{rmk} Note that this is not a necessary condition. For example, consider $n = N = 4$ and $\si_1^2 = 2$, $\si_2^2 = \si_3^2 = \si_4^2 = 1$. Then condition~\eqref{eq:main} does not hold, but conditions of \cite[Theorem 1.1]{MyOwn5} hold, and therefore, there are no $4$-tuple (in this case, total) collisions. 
\end{rmk}

\section{Proof of Theorem~\ref{thm:main}}

We present the proof of Theorem~\ref{thm:main} as a sequence of lemmata, which are of their own interest. Define
$$
\Pi_N := \{x = (x_1, \ldots, x_N)' \in \BR^N\mid x_1 + \ldots + x_N = 0,\ \ x_1^2 + \ldots + x_N^2 = 1\}.
$$

\begin{lemma} There are no total collisions in Definition \ref{Def1}, if $\,\sigma_{1}^{2}, \ldots , \sigma_{N}^{2}\,$ satisfy 
$$
\max\limits_{x \in \Pi_N}\SL_{k=1}^N\si_k^2x_k^2 < \frac{N-1}{2N}\SL_{k=1}^N\si_k^2.
$$
\label{lemma:total}
\end{lemma}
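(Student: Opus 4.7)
The plan is to build a nonnegative radial process whose zero set coincides with total collisions, and then dominate it from below by a squared Bessel process of dimension $>2$ via a time change. Set $\bar Y(t) := N^{-1}\sum_k Y_k(t)$, $\tilde Y_k(t) := Y_k(t) - \bar Y(t)$ and $R^2(t) := \sum_k \tilde Y_k(t)^2$; a total collision at time $t$ is precisely $R(t)=0$, so the goal is to prove $R(t)>0$ for all $t>0$ a.s. Writing the ranked process via the standard Skorokhod SDE
\[
dY_k = g_k\,dt + \sigma_k\,dB_k + \tfrac12\,dL_{(k,k+1)} - \tfrac12\,dL_{(k-1,k)},
\]
(with $B_k$ independent Brownian motions and $L_{(k,k+1)}$ the local time at $0$ of $Y_{k+1}-Y_k$) and applying It\^o to $R^2$, two cancellations do the work: the local-time contribution $\sum_k \tilde Y_k(dL_{(k,k+1)}-dL_{(k-1,k)})$ reindexes to $\sum_{k=1}^{N-1}(\tilde Y_k-\tilde Y_{k+1})\,dL_{(k,k+1)}$ and vanishes because $\tilde Y_k = \tilde Y_{k+1}$ on the support of $dL_{(k,k+1)}$; and $\sum_k d[\tilde Y_k]_t = \tfrac{N-1}{N}S\,dt$ with $S := \sum_k \sigma_k^2$. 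This yields
\[
dR^2(t) = \Bigl(\tfrac{N-1}{N}S + 2\sum_k g_k \tilde Y_k\Bigr)dt + dM(t),\qquad d[M]_t = 4R^2(t)\,\nu(t)\,dt,
\]
where $\nu(t) := \sum_k \sigma_k^2 x_k(t)^2$ and $x_k(t) := \tilde Y_k(t)/R(t) \in \Pi_N$ whenever $R(t)>0$.

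Next I would invoke the result of \cite{MyOwn5} that absence of collisions is independent of the drifts to reduce to $g_k\equiv 0$. The hypothesis of the lemma then gives the uniform bound $\nu(t) \le \alpha := \max_{x\in\Pi_N}\sum_k \sigma_k^2 x_k^2 < \tfrac{N-1}{2N}S$. Writing $M = \int_0^{\cdot} 2R\sqrt{\nu}\,dB$ by Dambis--Dubins--Schwarz and time-changing by $A(t) := \int_0^t \nu(s)\,ds$, which is strictly increasing with $A(\infty)=\infty$ because $\nu\ge \min_k\sigma_k^2>0$, the process $\hat R^2(u):=R^2(A^{-1}(u))$ satisfies
\[
d\hat R^2(u) = \frac{(N-1)S}{N\,\nu(A^{-1}(u))}\,du + 2\hat R(u)\,d\hat B(u),\qquad \frac{(N-1)S}{N\nu} \ge d_0 := \frac{(N-1)S}{N\alpha} > 2,
\]
for a new Brownian motion $\hat B$. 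By the Yamada--Watanabe comparison theorem (applicable because the diffusion $2\sqrt{\cdot}$ is H\"older-$1/2$), $\hat R^2$ is bounded below by a squared Bessel process of dimension $d_0 > 2$ started from the same initial value, which is strictly positive on $(0,\infty)$; reverting the time change gives $R(t)>0$ for all $t>0$ a.s.

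The main technical points I anticipate are: (a) executing It\^o rigorously with the boundary local times so the two cancellations are transparent; (b) making the drift-free reduction precise by a clean citation to \cite{MyOwn5}; and (c) handling the degenerate starting configuration $R(0)=0$ (the particles start clumped), which is settled by the fact that a squared Bessel process of dimension $>2$ starting at $0$ leaves $0$ instantly and never returns, so the Yamada--Watanabe lower bound still forces $\hat R^2(u)>0$ for every $u>0$.
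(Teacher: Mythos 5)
The paper does not prove this lemma itself but defers to \cite[pp.~238--240]{IKS2013}, and your argument --- the centered radial process $R^2=\sum_k(Y_k-\bar Y)^2$, cancellation of the local-time terms, the time change by $\int_0^t\nu(s)\,\dd s$, and comparison with a squared Bessel process of dimension $(N-1)S/(N\alpha)>2$ --- is essentially that proof, so it is correct and takes the same route. One remark: since $R^2$ is symmetric in the coordinates, you can run the It\^o computation on the named particles $X_i$ (whose dynamics \eqref{eq:SDE-basic} contain no local times at all) rather than on the ranked $Y_k$; this sidesteps the fact that your two-term Skorokhod decomposition of $Y_k$ is only valid in the absence of triple collisions, which are not excluded under the hypothesis of the lemma.
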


The proof of Lemma~\ref{lemma:total} was already given as part of the main result in \cite[pp.238-240]{IKS2013}. The following is an immediate corollary.

\begin{cor} \label{cor2.2} There are no total collisions in Definition \ref{Def1}, if $\,\sigma_{1}^{2}, \ldots , \sigma_{N}^{2}\,$ satisfy 

\begin{equation} \label{eq2.2}
\max\limits_{1 \le k \le N}\si_k^2 \le \frac{N-1}{2N}\SL_{k=1}^N\si_k^2.
\end{equation}
\label{cor:total}
\end{cor}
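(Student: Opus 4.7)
The plan is to reduce Corollary \ref{cor:total} to Lemma \ref{lemma:total} via a one-line linear-algebra bound. Since every $x \in \Pi_N$ satisfies $x_1^2 + \cdots + x_N^2 = 1$ by definition, the elementary estimate
$$
\SL_{k=1}^N \si_k^2 x_k^2 \;\le\; \max_{1 \le k \le N} \si_k^2 \cdot \SL_{k=1}^N x_k^2 \;=\; \max_{1 \le k \le N} \si_k^2
$$
holds for every such $x$. Taking the supremum over $\Pi_N$ and chaining with the standing hypothesis \eqref{eq2.2} yields
$$
\max_{x \in \Pi_N}\SL_{k=1}^N \si_k^2 x_k^2 \;\le\; \max_{1 \le k \le N} \si_k^2 \;\le\; \frac{N-1}{2N}\SL_{k=1}^N \si_k^2,
$$
which is (almost) the hypothesis of Lemma~\ref{lemma:total}. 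Applying that lemma delivers the desired absence of total collisions.

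The only delicate point is the discrepancy between the strict inequality required in Lemma~\ref{lemma:total} and the weak inequality assumed in \eqref{eq2.2}. In generic configurations, at least one of the two inequalities in the chain above is strict, so Lemma~\ref{lemma:total} applies verbatim. In the boundary situation where all three quantities coincide --- which forces the maximum $\max_k \si_k^2$ to be attained at several indices in a very specific proportion to the total sum --- one either verifies that the Lyapunov/semimartingale argument of \cite[pp.\,238--240]{IKS2013} tolerates equality, or perturbs $\si_1^2, \ldots, \si_N^2$ slightly to regain strictness and then passes to the limit using the fact that the total-collision event depends monotonically on the coefficients. I expect this limit/boundary step to be the only piece requiring any care; the core reduction itself is just the trivial bound $\sum \si_k^2 x_k^2 \le (\max_k \si_k^2)\sum x_k^2$ evaluated on the unit sphere.
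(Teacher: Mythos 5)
Your core reduction is exactly the paper's: the paper gives no argument beyond calling the corollary ``immediate,'' and the intended step is precisely your one-line bound $\sum_{k}\si_k^2x_k^2\le\max_k\si_k^2\cdot\sum_kx_k^2=\max_k\si_k^2$ on $\Pi_N$, chained with \eqref{eq2.2} and fed into Lemma~\ref{lemma:total}.

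On the boundary issue you raise: your worry is legitimate, and in fact sharper than the paper. If the maximum $\max_k\si_k^2$ is attained at two or more indices, a vector $x\in\Pi_N$ supported on those indices (e.g.\ $(1/\sqrt2,-1/\sqrt2)$ on two of them, zero elsewhere) achieves $\sum_k\si_k^2x_k^2=\max_k\si_k^2$, so when \eqref{eq2.2} holds with equality the strict hypothesis of Lemma~\ref{lemma:total} genuinely fails; note the paper is itself inconsistent here, writing $\le$ in \eqref{eq2.2} but $<$ in \eqref{eq:total-4} and in the main condition \eqref{eq:main}. However, your proposed rescue by perturbation is not sound as stated: the perturbed systems are different processes, no monotonicity or continuity of the no-total-collision property in the diffusion coefficients is established anywhere, and an a.s.\ statement for each perturbed law does not pass to the unperturbed law without such an input. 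The clean resolutions are either to read \eqref{eq2.2} with strict inequality (as the paper effectively does downstream, and which suffices for Corollary~\ref{cor:total-easy} and Theorem~\ref{thm:main}), or to observe that when the maximum is attained at a unique index the first inequality in your chain is automatically strict on $\Pi_N$, so only the doubly degenerate case (equality in \eqref{eq2.2} and a non-unique maximizer) is actually left open.
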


In particular, for a system of $N = 4$ competing Brownian particles, this condition (\ref{eq2.2}) takes the form
\begin{equation}
\label{eq:total-4}
\max\left(\si_1^2, \si_2^2, \si_3^2, \si_4^2\right) < \frac38\left(\si_1^2 + \si_2^2 + \si_3^2 + \si_4^2\right).
\end{equation}

\begin{rmk}
\label{rmk:comp}
This new sufficient condition does not follow from the results of \cite[Theorem 1.1, Theorem 1.3]{MyOwn5}. Take the following example:
$$
\si_1^2 = \si_4^2 = 1,\ \ \si_2^2 = \si_3^2 = \frac12.
$$
This example satisfies~\eqref{eq:total-4}, but not the assumptions of the previous results in \cite[Theorem 1.1, Theorem 1.3]{MyOwn5}. More generally, the set of $\,\si_{1}^{2}, \ldots , \si_{N}^{2}\,$ with 
\[
\si_{1}^{2} \, =\, \si_{N}^{2} \, =\, 1 \, , \ \ \si_{2}^{2} \, =\, \cdots = \si_{N-1}^{2} \, =\,  \frac{\,1\,}{\,N-2\,} \, 
\]
satisfies (\ref{eq2.2}) and hence by Corollary \ref{cor2.2} there are no total collisions. This result is not covered by the previous results in \cite{MyOwn5}. \hfill $\square$
\end{rmk}

The next corollary trivially follows from Corollary~\ref{cor:total}. 

\begin{cor}There are no total collisions in Definition \ref{Def1}, if $\,\sigma_{1}^{2}, \ldots , \sigma_{N}^{2}\,$ satisfy 
$$
\max\limits_{1 \le k \le N}\si_k^2 \le \frac{N-1}{2}\min_{1 \le k \le N} \si_k^2.
$$
\label{cor:total-easy}
\end{cor}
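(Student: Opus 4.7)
The plan is to derive this statement directly from Corollary~\ref{cor:total} by showing that its hypothesis~\eqref{eq2.2} is implied by the simpler condition
\[
\max_{1 \le k \le N}\si_k^2 \le \frac{N-1}{2}\min_{1 \le k \le N} \si_k^2.
\]
So there is essentially nothing to prove beyond a one-line inequality on the diffusion coefficients.

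First I would bound the sum $\sum_{k=1}^N \si_k^2$ from below by $N\cdot \min_{1 \le k \le N}\si_k^2$, which is immediate. Multiplying this bound by $(N-1)/(2N)$ yields
\[
\frac{N-1}{2N}\sum_{k=1}^N \si_k^2 \;\ge\; \frac{N-1}{2}\,\min_{1 \le k \le N}\si_k^2.
\]
Combining with the hypothesis of the corollary then gives $\max_k \si_k^2 \le \frac{N-1}{2N}\sum_{k=1}^N \si_k^2$, which is exactly condition~\eqref{eq2.2}. Corollary~\ref{cor:total} then delivers the absence of total collisions.

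There is no serious obstacle here: the entire content is the trivial fact that the average of $\si_k^2$ is at least the minimum. The role of this statement in the paper is purely to repackage the previous corollary into a form depending only on $\min_k \si_k^2$ and $\max_k \si_k^2$, which is precisely the shape of the main hypothesis~\eqref{eq:main} used in Theorem~\ref{thm:main} for general $n$-tuple collisions.
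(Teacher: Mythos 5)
Your proof is correct and is exactly the argument the paper intends: the paper states that this corollary ``trivially follows'' from Corollary~\ref{cor:total}, and your one-line bound $\sum_{k=1}^N\si_k^2 \ge N\min_k\si_k^2$ is precisely the trivial step being invoked.
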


The next lemma establishes a link between total collisions and multiple collisions. 

\begin{lemma} There are no collisions between particles with ranks $k_- <  \cdots < k_+$, if for every pair $l_-, l_+ \in \{ 1, \ldots, N\}$ such that 
\begin{equation}
\label{eq:pair}
1 \le l_- \le k_- < k_+ \le l_+ \le N,
\end{equation}
the system of $(l_+ - l_- + 1)$ competing Brownian particles with diffusion coefficients $\si_{l_-}^2, \ldots, \si_{l_+}^2$ does not have total collisions.
\label{lemma:total2multiple}
\end{lemma}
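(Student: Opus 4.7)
The plan is a standard ``localization'' (or ``isolation window'') argument, reducing a collision of consecutive ranks $k_-, \ldots, k_+$ to a total collision of a subsystem on an extended block of consecutive ranks $l_-, \ldots, l_+$. I argue by contradiction: suppose that, for the given $k_- < k_+$, one has $\MP(\exists\, t > 0 : Y_{k_-}(t) = \cdots = Y_{k_+}(t)) > 0$. At any collision instant $\tau$ in this event, extend the coincident block to its maximal run by defining $L_-(\tau) \le k_-$ to be the smallest index and $L_+(\tau) \ge k_+$ to be the largest index with $Y_{L_-(\tau)}(\tau) = \cdots = Y_{L_+(\tau)}(\tau)$, under the conventions $Y_0 \equiv -\infty$ and $Y_{N+1} \equiv +\infty$. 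The pair $(L_-(\tau), L_+(\tau))$ takes only finitely many values, so some deterministic pair $(l_-, l_+)$ satisfying \eqref{eq:pair} is attained with positive probability.

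Continuity of the ranked process then supplies an isolation window. At such a $\tau$, by maximality $Y_{l_- - 1}(\tau) < Y_{l_-}(\tau)$ and $Y_{l_+}(\tau) < Y_{l_+ + 1}(\tau)$ (with the obvious boundary conventions when $l_- = 1$ or $l_+ = N$), and these strict inequalities persist on a random open neighbourhood $(\tau - \delta, \tau + \delta)$. On this neighbourhood no rank swap crosses the boundaries of the block, hence the set of names $I \subset \{1, \ldots, N\}$ occupying ranks $l_-, \ldots, l_+$ is constant in $t$ and has cardinality $l_+ - l_- + 1$, while internal rank $j$ within $I$ coincides with global rank $l_- + j - 1$. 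Substituting into \eqref{eq:SDE-basic}, the subsystem $(X_i)_{i \in I}$, restricted to $(\tau - \delta, \tau + \delta)$, evolves as a system of $l_+ - l_- + 1$ competing Brownian particles with drifts $g_{l_-}, \ldots, g_{l_+}$, diffusion coefficients $\si_{l_-}^2, \ldots, \si_{l_+}^2$, and driving Brownian motions $(W_i)_{i \in I}$, inheriting the lexicographic tie-breaking rule from the full system.

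The coincidence $Y_{l_-}(\tau) = \cdots = Y_{l_+}(\tau)$ is then exactly a total collision of this subsystem, occurring strictly inside its isolation window. By the hypothesis of the lemma, combined with the independence of the no-total-collision property from drifts and initial condition recalled from \cite{MyOwn5}, this has probability zero, contradicting the positive probability statement above. A finite union bound over the admissible pairs $(l_-, l_+)$ in \eqref{eq:pair} concludes the argument.

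The main obstacle is to make the localization step rigorous, since the isolation window is defined via a random stopping time whereas the hypothesis on the subsystem applies on a deterministic time frame. I would handle this by covering all isolation intervals by the countable family of stopping times
\[
\tau_q \, :=\, \inf\{t > q : Y_{l_- - 1}(t) = Y_{l_-}(t) \text{ or } Y_{l_+}(t) = Y_{l_+ + 1}(t)\}, \qquad q \in \BQ_{\ge 0},
\]
invoking the strong Markov property at each $q$ to realise the subsystem starting at time $q$ as a genuine $(l_+ - l_-+1)$-particle competing Brownian system with the prescribed diffusion coefficients on $[q, \tau_q]$, and concluding by countable subadditivity over the rationals and over the finitely many admissible pairs $(l_-, l_+)$.
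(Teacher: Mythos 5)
Your proposal is correct and follows essentially the same route as the paper's proof: extend the colliding block to a maximal run $l_-,\ldots,l_+$, use continuity of the ranked process to isolate the block on a time interval starting at a rational time $q$, identify the names occupying those ranks as a genuine $(l_+-l_-+1)$-particle competing Brownian system with diffusion coefficients $\si_{l_-}^2,\ldots,\si_{l_+}^2$ undergoing a total collision, and conclude by countable subadditivity over rational $q$ and the finitely many admissible pairs $(l_-,l_+)$. Your additional remarks on the strong Markov property and the initial-condition independence from \cite{MyOwn5} make explicit what the paper leaves implicit, but do not change the argument.
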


\begin{proof} Our arguments are similar to the ones in the proof of \cite[Theorem 3.8]{MyOwn5}. Assume there is, in fact, a collision between particles with ranks $  k_- < \cdots  < k_+ $: 
\begin{equation}
\label{eq:0}
Y_{k-}(t) = \cdots = Y_{k+}(t)\ \ \mbox{for some}\ \ t > 0.
\end{equation}
Then there exist 
\begin{equation}
\label{eq:1}
l_- \in \{ 1, \ldots, k_-\} \ \ \mbox{and}\ \ l_+ \in \{ k_+, \ldots, N \} \ \ \mbox{such that}
\end{equation}
\begin{equation}
\label{eq:2}
Y_{l_- - 1}(t) < Y_{l_-}(t) = \cdots = Y_{l_+}(t) < Y_{l_++1}(t) \ \ \mbox{for some}\ \ t > 0.
\end{equation}
(For consistency of notation, we let $Y_0(t) := -\infty$ and $Y_{N+1}(t) := +\infty$.) 
From~\eqref{eq:2}, by continuity of ranked processes $Y(\cdot)$, there exists a rational number $q (\ge 0)$ such that 
\begin{equation}
\label{eq:3}
Y_{l_- - 1}(s) < Y_{l_-}(s) = \cdots = Y_{l_+}(s) < Y_{l_++1}(s),\ \ \text{ for every }  \, \, s \in [q, t].
\end{equation}
Let $J$ be the set of the names of particles with ranks $l_-, \ldots, l_+$ at the time $s \in [q, t]$. Obviously, there are $l_+ - l_- + 1$ elements in this set. Because of~\eqref{eq:3}, this set is independent of choice of $s \in [q, t]$, that is, these particles do not collide with other particles on this time interval $[q, t]$, and only then they can exchange ranks. Thus, the particles $(X_i(q + \cdot),\, i \in J)$ itself form a system of $l_+ - l_- + 1$ competing Brownian particles with diffusion coefficients $\si_{l_-}^2, \ldots, \si_{l_+}^2$, and it experienced a total collision at the moment $t - q$. This event, however, has probability zero, because of the assumption. Taking the countable union over all rational $q \ge 0$ and over all $l_-, l_+$ from~\eqref{eq:1},  we conclude the event in~\eqref{eq:0} has probability zero. This completes the proof. 
\end{proof}

Combining Lemma~\ref{lemma:total2multiple} with Corollary~\ref{cor:total-easy}, we complete the proof of Theorem~\ref{thm:main}.

\section{Other Types of Collisions}

In the paper \cite{MyOwn5}, we also consider more complicated collisions, such as the collision event 
in~\eqref{eq:multi}. The results of Theorem~\ref{thm:main} and Section 2 can be useful  here, because we solve this problem in part by reducing it to a study of total collisions. Here, we do not study general complicated collisions. Rather, let us state briefly the results for the case $N = 4$ (four particles), where they are the simplest but still interesting.

For a system of $N = 4$ competing Brownian particles, we have the following types of collisions:
\begin{equation} \label{eq:total}
Y_1(t) = Y_2(t) = Y_3(t) = Y_4(t)\ \ \mbox{(the total collision);}
\end{equation}
\begin{equation}
\label{eq:triple-123}
Y_1(t) = Y_2(t) = Y_3(t)\ \ \mbox{(a triple collision);}
\end{equation}
\begin{equation}
\label{eq:triple-234}
Y_2(t) = Y_3(t) = Y_4(t)\ \ \mbox{(a triple collision);}
\end{equation}
\begin{equation}
\label{eq:simult-12-34}
Y_1(t) = Y_2(t)\ \ \mbox{and}\ \ Y_3(t) = Y_4(t)\ \ \mbox{(the simultaneous collision).}
\end{equation}
For the total collision of type (\ref{eq:total}), a sufficient condition to avoid them is given in Theorem~\ref{thm:main}, and in the intermediate results of Section 2; for example~\eqref{eq:total-4}. For triple collisions as in~\eqref{eq:triple-123},~\eqref{eq:triple-234}, as well as the simultaneous collision in~\eqref{eq:simult-12-34}, we can find sufficient condition to avoid them, using the same techniques as in \cite{MyOwn5}. Here is a result. 

\begin{lemma}
(a) Assume that, in addition to~\eqref{eq:total-4}, we have:
$$
\si_2^2 \ge \frac12\left(\si_1^2 + \si_3^2\right).
$$
Then there are no triple collisions of the type~\eqref{eq:triple-123}.

\medskip

(b) Assume that, in addition to~\eqref{eq:total-4}, we have:
$$
\si_3^2 \ge \frac12\left(\si_2^2 + \si_4^2\right).
$$
Then there are no triple collisions of the type~\eqref{eq:triple-234}.

\medskip

(c) Under condition~\eqref{eq:total-4}, there are no simultaneous collisions of the type \eqref{eq:simult-12-34}.
\end{lemma}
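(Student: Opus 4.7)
The plan is to obtain parts (a) and (b) as direct applications of Lemma~\ref{lemma:total2multiple}, and to handle part (c) by splitting off the four-particle total collision and then applying a planar-polarity argument on an interval of rank stability.

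For part (a), apply Lemma~\ref{lemma:total2multiple} with $k_- = 1$, $k_+ = 3$; the admissible pairs are $(l_-, l_+) = (1, 3)$ and $(1, 4)$. The pair $(1, 4)$ calls for absence of total collisions in the full four-particle system, which follows from~\eqref{eq:total-4} by Corollary~\ref{cor:total}. The pair $(1, 3)$ calls for absence of total collisions in a three-particle subsystem with diffusion coefficients $\sigma_1^2, \sigma_2^2, \sigma_3^2$; for three particles a total collision is the same as a triple collision, and the sharp concavity criterion of \cite{MyOwn3} reads $\sigma_2^2 \ge \tfrac12(\sigma_1^2 + \sigma_3^2)$, which is exactly the added hypothesis. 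Part (b) is entirely symmetric, using $k_- = 2$, $k_+ = 4$ and the three-particle concavity inequality $\sigma_3^2 \ge \tfrac12(\sigma_2^2 + \sigma_4^2)$.

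For part (c), write the simultaneous-collision event as $A_1 \cup A_2$, where $A_1$ additionally has $Y_2(t) = Y_3(t)$ (hence a total four-particle collision, ruled out by Corollary~\ref{cor:total} together with~\eqref{eq:total-4}) and $A_2$ has $Y_2(t) < Y_3(t)$ strictly. On $A_2$, imitate the rational-localization step from the proof of Lemma~\ref{lemma:total2multiple}: by continuity of $Y$ there exists a rational $q \in (0,t)$ such that $Y_2 < Y_3$ strictly on $[q, t]$, and moreover the gaps $Y_2(q) - Y_1(q)$ and $Y_4(q) - Y_3(q)$ are strictly positive (each fixed rational time has a continuous gap distribution, so this holds simultaneously at all rationals with probability one). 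On $[q, t]$ the bottom pair $(X_i)_{i \in J_-}$ and the top pair $(X_j)_{j \in J_+}$ (where $J_-$, $J_+$ are the disjoint, constant-in-$s$ name-sets) form two independent two-particle competing Brownian subsystems, driven by disjoint coordinate subsets of $W$. The two gap processes are Skorohod reflections of independent scalar Brownian motions with drifts, so their simultaneous vanishing corresponds to the event that an associated two-dimensional Brownian motion with non-degenerate covariance hits the origin while starting away from it. Since the origin is polar for planar Brownian motion, this event has probability zero; a countable union over rational $q$ and over the finitely many choices of $(J_-, J_+)$ then gives $\MP(A_2) = 0$.

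The main obstacle is the last step of (c): rigorously identifying the simultaneous vanishing of the two gap processes with origin-hitting for a planar Brownian motion. This rests on the Skorohod decomposition of each gap in terms of its driving Brownian motion, together with the classical polarity of the origin for two-dimensional Brownian motion (with drift and non-degenerate covariance). Everything else reduces to the combinatorial machinery of Lemma~\ref{lemma:total2multiple} and the sharp three-particle triple-collision criterion of \cite{MyOwn3}.
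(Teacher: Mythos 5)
The paper does not actually supply a proof of this lemma --- it states only that ``the proof is similar to that of \cite[Theorem 3.8]{MyOwn5} and is omitted'' --- so there is no line-by-line comparison to make. Your parts (a) and (b) are correct and are exactly the intended argument: the enumeration of admissible pairs $(l_-,l_+)$ is right ($(1,3),(1,4)$ for (a) and $(2,4),(1,4)$ for (b)), the pair $(1,4)$ is disposed of by Corollary~\ref{cor:total} via~\eqref{eq:total-4}, and the three-particle pairs reduce to the concavity criterion of \cite{MyOwn3}, which is initial-condition-independent and hence usable inside Lemma~\ref{lemma:total2multiple}. Part (c) is also structurally sound: the split into $A_1$ (total collision) and $A_2$ (separated pairs), the rational localization, and the observation that on $[q,t]$ the two pairs are driven by disjoint coordinates of $W$ and hence are independent, all work.

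The one step you should tighten is the bridge from ``the two gap processes are Skorohod reflections of independent Brownian motions'' to ``their simultaneous vanishing is the event that a planar Brownian motion hits the origin.'' The Skorohod decomposition by itself does not give this: if $Z_i = \xi_i + L_i$ with $L_i$ the boundary local time, then $Z_i(s)=0$ means $\xi_i(s)$ equals its running minimum, not that $\xi_i(s)$ takes a fixed value, so the simultaneous-vanishing set is not literally the preimage of a point under $(\xi_-,\xi_+)$. The correct bridge is distributional: first remove the drifts by Girsanov on the compact horizon $[q,t]$ (which preserves null events), then invoke L\'evy's theorem to identify each driftless reflected gap in law with $|B_i|$ for independent Brownian motions $B_1,B_2$, so that the pair $(Z_-,Z_+)$ has the law of $(|B_1|,|B_2|)$ and simultaneous vanishing becomes $(B_1,B_2)$ hitting the origin, which is polar. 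Equivalently, you may quote the non-attainability of the corner for normally reflected Brownian motion in the quadrant (Varadhan--Williams, $\alpha=0$), which is essentially the route taken in \cite{MyOwn3,MyOwn5}. With that repair, your proof is complete and, for part (c), arguably more self-contained than the reference to the SRBM machinery that the paper implicitly relies on.
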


The proof is similar to that of \cite[Theorem 3.8]{MyOwn5} and is omitted.

\section{Infinite Systems of Competing Brownian Particles}

Consider an infinite sequence $(X_i)_{i \ge 1}$ of continuous, adapted, real-valued processes $X_i = (X_i(t), t \ge 0)$, $i \ge 1$. Assume that for every $t \ge 0$, we can rank $X_i(t),\, i = 1, 2, \ldots$ from the bottom upward:
$$
X_{(1)}(t) \le X_{(2)}(t) \le \cdots
$$
As in the finite case, we resolve ties in lexicographic order. Let us fix parameters $g_n \in \BR$ and $\si_n > 0$, for $n = 1, 2, \ldots$ and take countably many i.i.d. Brownian motions $W_1, W_2, \ldots$ Assume the infinite sequence $X_1, X_2, \ldots$ of processes satisfy
$$
\md X_i(t) = \SL_{k=1}^{\infty}1\left(X_i\ \mbox{has rank}\ k\ \mbox{at time}\ t\right)\left(g_k\md t + \si_k\md W_i(t)\right),\ \ i = 1, 2, \ldots
$$
Then the $\BR^{\infty}$-valued process $X = (X(t), t \ge 0)$, with $X(t) = (X_i(t))_{i \ge 1}$, is called an {\it infinite system of competing Brownian particles} with {\it drift coefficients} $g_1, g_2, \ldots$ and {\it diffusion coefficients} $\si_1^2, \si_2^2, \ldots$ Similarly to the finite system, each $X_i = (X_i(t), t \ge 0)$ is called the {\it $i$th named particle}, and each $Y_k = (Y_k(t):= X_{(k)}(t), t \ge 0)$ is called the {\it $k$th ranked particle}. These systems were studied in \cite{IKS2013, MyOwn6, S2011}. Multiple collisions are defined similarly to finite systems. Triple collisions were studied in \cite[Section 5]{MyOwn6}, however, quadruple and higher-order collisions have not been yet studied for this type of infinite systems. 

The following existence and uniqueness results were proved in \cite{IKS2013, MyOwn6}.

\begin{prop} Assume the initial condition $x = (x_i)_{i \ge 1} = X(0)$ satisfies
\begin{equation}
\label{eq:series}
\SL_{i=1}^{\infty}e^{-\al x_i^2} < \infty\ \ \mbox{for all}\ \ \al > 0.
\end{equation}

\smallskip

(a) There exists a weak version of the infinite system if  
\begin{equation}
\label{eq:bounded}
\sup\limits_{n \ge 1}|g_n| < \infty\ \ \mbox{and}\ \ \sup\limits_{n \ge 1}\si_n^2 < \infty.
\end{equation}

\smallskip

(b) Under the following stronger condition, this weak version is unique in law:
$$
g_{n_0} = g_{n_0+1} = \ldots\ \ \mbox{and}\ \ \si_{n_0} = \si_{n_0+1} = \ldots\ \ \mbox{for some}\ \ n_0 \ge 1 \, .
$$
\end{prop}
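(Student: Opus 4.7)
The plan is the classical approximation-by-finite-systems strategy. For part~(a), I would introduce for each $N \ge 1$ the finite system $X^{(N)} = (X_1^{(N)}, \ldots, X_N^{(N)})$ of competing Brownian particles with drifts $g_1, \ldots, g_N$, diffusions $\si_1^2, \ldots, \si_N^2$, driven by $W_1, \ldots, W_N$, and starting from the truncated initial data $(x_1, \ldots, x_N)$. Existence for each $N$ follows from \cite{Bass1987}. The task is then to show that for each fixed $i$, the process $X_i^{(N)}$ stabilizes as $N \to \infty$ on every compact time interval, and that the resulting limit solves the infinite SDE.

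The technical heart is a localization lemma: for every $T, M > 0$ there is an almost surely finite random index $N_0(T, M)$ such that $X_i^{(N)}(t) > M$ for all $t \in [0, T]$, all $i > N_0$, and all $N \ge N_0$. This is where both hypotheses~\eqref{eq:series} and~\eqref{eq:bounded} are used: the former controls how quickly the initial points $x_i$ escape to $+\infty$, and the latter, combined with the SDE~\eqref{eq:SDE-basic}, yields a uniform Gaussian bound on $\sup_{t \le T}|X_i^{(N)}(t) - x_i|$ independent of both $N$ and $i$. Once localization is available, the bottom-ranked portion of $X^{(N)}$ coincides with that of $X^{(N+1)}$ for all $N \ge N_0$; this defines the limit $X_i(t) := \lim_N X_i^{(N)}(t)$, and the limit satisfies the infinite SDE term by term since the rank of each $X_i$ is eventually frozen in $N$ on $[0,T]$.

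For part~(b), I would couple two weak solutions $X$ and $\widetilde{X}$ with the same initial data and the same Brownian drivers. The extra assumption that $g_k, \si_k$ are constant for $k \ge n_0$ means that beyond rank $n_0$ all particles feel exactly the same drift and diffusion, so their relative labels do not influence the SDE structurally; only the bottom $n_0 - 1$ ranks contribute non-trivial state-dependence. Applying the localization lemma again at a threshold just above rank $n_0$ and truncating to a finite system of size $N \gg n_0$, one reduces uniqueness to uniqueness in law of the finite truncation (known from \cite{Bass1987}), together with a consistency argument as $N \to \infty$ that uses the translation-invariance of the dynamics beyond rank $n_0$ to absorb the contribution of the high-rank tail.

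The main obstacle I expect is the localization lemma used in both parts: proving that ranks of a fixed cluster of particles stabilize uniformly on compacts in time as $N \to \infty$. Everything else is essentially bookkeeping once this is in hand, but this lemma is what singles out the sub-Gaussian condition~\eqref{eq:series} (rather than a weaker polynomial decay) as the right hypothesis, and its proof combines the Gaussian tail of the initial data with the Gaussian maximal inequality applied to each driving Brownian motion $W_i$, summed via~\eqref{eq:series}.
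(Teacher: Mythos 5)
The paper offers no proof of this Proposition: it is stated as a result ``proved in \cite{IKS2013, MyOwn6}'', so there is no internal argument to compare against. Your sketch does follow the broad strategy of those references --- finite truncations together with a localization lemma of exactly the type you describe (compare \cite[Lemma 3.4]{MyOwn6}, which the paper quotes a few lines later: a.s.\ only finitely many particles reach below any given level on $[0,T]$) --- and your explanation of why \eqref{eq:series} and \eqref{eq:bounded} combine, via Gaussian maximal inequalities and Borel--Cantelli, is the right heuristic for why sub-Gaussian decay of the initial configuration is the correct hypothesis.

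There is, however, a genuine gap in your consistency step for part (a). You assert that once particle $N+1$ is localized above level $M$, ``the bottom-ranked portion of $X^{(N)}$ coincides with that of $X^{(N+1)}$.'' Driving the two finite systems by the same Brownian motions and observing that the first $N$ coordinates of $X^{(N+1)}$ then satisfy the same SDE as $X^{(N)}$ yields pathwise equality only if the finite system enjoys pathwise uniqueness. But, as the paper itself recalls, only weak existence and uniqueness in law are known for \eqref{eq:SDE-basic} in general (\cite{Bass1987}); strong solutions are known to exist only up to the first triple collision (\cite{IKS2013}), and continuation past it is open. Consequently the almost-sure stabilization $X_i(t) := \lim_N X_i^{(N)}(t)$ cannot be obtained this way: it would in effect manufacture a strong solution of the infinite system, which is more than part (a) claims and more than current techniques deliver. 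The constructions in \cite{IKS2013, MyOwn6} work instead at the level of laws --- tightness of the finite approximations and identification of subsequential weak limits, or comparison couplings of the ranked gap processes --- precisely to sidestep this. The same objection applies to part (b): ``coupling two weak solutions with the same Brownian drivers'' presupposes a pathwise statement that is not available, and the uniqueness proof under the eventually-constant-coefficient hypothesis is a separate argument rather than a corollary of finite-dimensional uniqueness plus localization. Finally, the localization lemma itself, which you correctly identify as the technical heart, is asserted but not proved.
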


Now, a remarkable fact is that the condition~\eqref{eq:main} in Theorem \ref{thm1.1} is determined independently of the number $N$ of particles. This observation allows us to prove the absence of $n$-tuple collision in Definition \ref{Def1} under the condition (\ref{AnC}) below even for infinite systems. 

\begin{thm}
\label{thm:infinite}
Take any version of an infinite system of competing Brownian particles with drift and diffusion coefficients satisfying~\eqref{eq:bounded}, with the initial configuration satisfying~\eqref{eq:series}. Take an $n \ge 4$ and assume 
\begin{equation} \label{AnC}
\sup\limits_{k \ge 1}\si_k^2 < \frac{n-1}2\inf\limits_{k \ge 1}\si_k^2.
\end{equation}
Then there are no $n$-tuple collisions. 
\end{thm}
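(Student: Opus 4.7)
The plan is to reduce the infinite setting to the finite one, mirroring the finite-case proof: combine Corollary~\ref{cor:total-easy} (no total collisions under a max/min ratio bound) with the localization idea from Lemma~\ref{lemma:total2multiple} (an $n$-tuple collision at ranks $k_-<\cdots<k_+$ can be enclosed, on a rational-endpoint interval $[q,t]$, in a strictly isolated cluster of consecutive ranks $l_-,\ldots,l_+$). The key structural reason this works in infinite volume is exactly the remark the authors highlight: the ratio condition \eqref{AnC} does not depend on the system size, so once we have isolated a finite sub-cluster of size at least $n$, the hypothesis transfers verbatim.

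Concretely, I would first argue that on the event that particles of ranks $k_-<\cdots<k_+$ collide at some time $t>0$, one can find integers $l_-$ with $1\le l_-\le k_-$ and $l_+\ge k_+$ such that
\[
Y_{l_- - 1}(t) < Y_{l_-}(t)=\cdots=Y_{l_+}(t) < Y_{l_+ +1}(t).
\]
For the lower index the argument is as in the finite case (with $Y_0:=-\infty$). For the upper index, one uses that the initial configuration satisfies \eqref{eq:series} and that $\sup_n|g_n|,\sup_n\sigma_n^2<\infty$: by the propagation of the Gaussian-tail condition established in the references cited before the statement (e.g., the infinite-system existence results in \cite{IKS2013, MyOwn6}), at every finite time $t$ the configuration $X(t)$ has only finitely many coordinates in any bounded interval almost surely, so the ranked process $Y_\cdot(t)$ cannot be constant on an infinite tail of ranks above $k_+$, and a finite $l_+$ with $Y_{l_+}(t)<Y_{l_++1}(t)$ exists.

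Next, by continuity of the ranked processes, there is a rational $q\in[0,t]$ such that the strict inequalities above persist on $[q,t]$. On this interval the set $J$ of names occupying ranks $l_-,\ldots,l_+$ is constant, the cluster does not interact with the rest of the system, and $(X_i(q+\cdot))_{i\in J}$ is a bona fide finite system of $M:=l_+-l_-+1$ competing Brownian particles with diffusion coefficients $\sigma_{l_-}^2,\ldots,\sigma_{l_+}^2$, experiencing a total collision at time $t-q$. Now $M-1=l_+-l_-\ge k_+-k_-=n-1$, so by \eqref{AnC},
\[
\max_{l_-\le k\le l_+}\sigma_k^2 \;\le\; \sup_{k\ge 1}\sigma_k^2 \;<\; \frac{n-1}{2}\inf_{k\ge 1}\sigma_k^2 \;\le\; \frac{M-1}{2}\min_{l_-\le k\le l_+}\sigma_k^2,
\]
which is exactly the hypothesis of Corollary~\ref{cor:total-easy} for that finite subsystem. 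Therefore the subsystem almost surely has no total collisions, and the event in question has probability zero. A countable union over rational $q\ge 0$ and over the (countably many) pairs $(l_-,l_+)$ with $l_-\le k_-$ and $l_+\ge k_+$ then gives the conclusion; a further countable union over $(k_-,k_+)$ with $k_+-k_-=n-1$ rules out all $n$-tuple collisions.

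The main obstacle is purely in step two: justifying rigorously, in the infinite setting, the existence of a finite upper index $l_+$ with $Y_{l_+}(t)<Y_{l_+ +1}(t)$, i.e.\ that a collision cluster cannot propagate to infinitely many ranks. Once this local-finiteness property at a fixed time is in hand (and it is a standard consequence of \eqref{eq:series} together with \eqref{eq:bounded} via the cited existence results), the rest of the proof is a direct transcription of the finite-$N$ argument, and the reason it succeeds is precisely that the constant $(n-1)/2$ in \eqref{AnC} is $N$-free.
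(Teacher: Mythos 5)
Your proposal is correct and follows essentially the same route as the paper: the authors likewise reduce to the finite-system argument of Lemma~\ref{lemma:total2multiple} after securing a finite isolated cluster $l_-\le k_-<k_+\le l_+$, and the local-finiteness step you flag as the main obstacle is exactly what they handle by citing \cite[Lemma 3.4]{MyOwn6} (a.s.\ only finitely many particles stay below any level $u$ up to any time $T$). Your explicit verification that the $N$-free constant $(n-1)/2$ transfers to the isolated subsystem of size $M\ge n$ is a correct filling-in of the details the paper leaves implicit.
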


\begin{proof} It is known from \cite[Lemma 3.4]{MyOwn6} that for every $T > 0$ and $u \in \BR$, there are a.s. only finitely many particles $X_i$ such that $\max_{0 \le t \le T}X_i(t) \le u$. Assume there is an $n$-tuple collision at time $t > 0$. Then there exist finite $l_-, l_+$ such that~\eqref{eq:2} holds. The rest of the proof follows the proof of Lemma~\ref{lemma:total2multiple}. 
\end{proof}

\section*{Acknowledgement} The authors are thankful to an anonymous reviewer for her/his careful reading and comments, in particular, a comment on a necessary condition for $n$-tuple collisions. This work was partially funded by NSF grant DMS 1409434, PI \textsc{Jean-Pierre Fouque} and by NSF grants DMS 1313373 and 1615229. 

\medskip\noindent

\end{document}